\providecommand{\U}[1]{\protect\rule{.1in}{.1in}}
\newtheorem{theorem}{Theorem}
\theoremstyle{plain}
\newtheorem{corollary}{Corollary}
\newtheorem{lemma}{Lemma}
\numberwithin{equation}{section}
\begin{document}
\title[Harmonic Extensions of Quasisymmetric Maps]{Harmonic Extensions of Quasisymmetric Maps}
\author{A. Fotiadis}
\curraddr{Department of Mathematics\\
Aristotle University of Thessaloniki\\
Thessaloniki 54.124\\
Greece } \email{fotiadisanestis@math.auth.gr}

\begin{abstract}
We study the Dirichlet problem for harmonic maps between
hyperbolic disks, under the assumption that the Euclidean harmonic
extension of the boundary map is $K-$quasiconformal, with
$K<\sqrt{2}$.

\end{abstract}
\subjclass{58E20} \keywords{Harmonic maps, hyperbolic spaces,
Schoen conjecture, quasiconformal, quasisymmetric} \maketitle

\section{Statement of the results}

Let us denote by $\mathbb{H}^{2}$ and $\mathbb{D}^{2}$ the
hyperbolic disk and the Euclidean disk respectively and let
$\mathbb{S}^{1}$ be the unit circle. Let $\Phi : \mathbb{D}^{2}
\rightarrow \mathbb{D}^{2}$ be a $C^{1}$ diffeomorphism. Assume,
without loss of generality, that $\Phi$ is sense preserving. The
\emph{complex distortion} of $\Phi$ at $z_{0}\in\mathbb{D}^{2}$ is
\begin{equation*}
D_{\Phi}(z_{0})=\frac{|\partial_{z}\Phi|(z_{0})+|\partial_{\overline{z}}\Phi|(z_{0})}{|\partial_{z}\Phi|(z_{0})-|\partial_{\overline{z}}\Phi|(z_{0})}\geq
1. \end{equation*} If $K\geq 1$, we say that $\Phi :
\mathbb{D}^{2} \rightarrow \mathbb{D}^{2}$ is
$K-$\emph{quasiconformal} when \,$D_{\Phi}(z) \leq K$ holds for
every $z\in\mathbb{D}^{2}$. We say that $\Phi : \mathbb{D}^{2}
\rightarrow \mathbb{D}^{2}$ is quasiconformal if it is
$K-$quasiconformal for some $K\geq 1$. A homeomorphism $\phi :
\mathbb{S}^1 \rightarrow \mathbb{S}^1$ is \emph{quasisymmetric} if
for there is a quasiconformal map $\Phi: \mathbb{D}^{2}\rightarrow
\mathbb{D}^{2}$, such that $\Phi|_{\mathbb{S}^1}=\phi$.

It was conjectured by Schoen in \cite{S} that every quasisymmetric
homeomorphism of the circle can be extended to a quasiconformal
harmonic map diffeomorphism of the hyperbolic disc onto itself,
and that such an extension is unique. This conjecture was
generalized to all hyperbolic spaces by Li and Wang in \cite{L-W}.
The uniqueness part of the conjecture has been proved by Li and
Tam in \cite{Li-Ta3} for dimension 2 and by Li and Wang \cite{L-W}
for all dimensions. The existence part of the conjecture is still
an open problem, and there are only partial results (e.g. see the
seminal works \cite{Li-Ta1, Li-Ta2, Li-Ta3} that opened a new era
for the study of harmonic maps between hyperbolic spaces). Note
that in \cite{Ma}, Markovic has provided an interesting partial
answer to the conjecture in dimension 2. Furthemore, one of the
most important results that far, is contained in a recent article
by Markovic \cite{M}, where he proves the conjecture in dimension
3.

In the present note we prove the next result, by following the
same strategy as in \cite{H-W} and \cite{Li-Ta3}.

\begin{theorem}
If $\phi : \mathbb{S}^{1}\rightarrow \mathbb{S}^{1}$ is a
quasisymmetric homeomorphism, then it has a quasiconformal
harmonic extension $u:\mathbb{H}^{2}\rightarrow \mathbb{H}^{2}$,
provided that the Euclidean harmonic extension
$\Phi:\mathbb{D}^{2}\rightarrow \mathbb{D}^{2}$ of $\phi$ is
$K-$quasiconformal with $K<\sqrt{2}$.
\end{theorem}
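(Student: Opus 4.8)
The plan is to follow the exhaustion scheme of \cite{Li-Ta3} and \cite{H-W}, using the Euclidean harmonic extension $\Phi$ of $\phi$ simultaneously as the source of boundary data on an exhausting family of subdomains and as a global comparison map.

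\emph{Set-up.} By the Rad\'o--Kneser--Choquet theorem $\Phi$ is a diffeomorphism of the open disk which, by hypothesis, is $K$-quasiconformal and extends continuously to $\phi$ on $\mathbb{S}^{1}$. Regarded as a map between the \emph{hyperbolic} disks, $\Phi$ is no longer harmonic; since $\partial_{z}\partial_{\bar z}\Phi=0$, its tension field $\tau(\Phi)$ consists only of the term coming from the curvature of the target and is proportional to $\partial_{z}\Phi\,\partial_{\bar z}\Phi$. Using the classical derivative estimates for Euclidean harmonic quasiconformal self-maps of the disk (which make $\Phi$ bi-Lipschitz for the hyperbolic metric), $\tau(\Phi)$ is bounded in the hyperbolic norm with a bound controlled by $K$; the assumption $K<\sqrt{2}$ will be used exactly to keep this bound small enough — essentially $\|\tau(\Phi)\|_{\mathbb{H}^{2}}<1$, i.e. below the curvature constant of the target — for the comparison step to close. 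Now fix an exhaustion $D_{1}\subset\subset D_{2}\subset\subset\cdots$ of $\mathbb{D}^{2}$ by smooth relatively compact subdomains with $\bigcup_{n}D_{n}=\mathbb{D}^{2}$. Since $\mathbb{H}^{2}$ is complete with curvature $-1<0$, energy minimization gives, for each $n$, a unique harmonic map $u_{n}:D_{n}\to\mathbb{H}^{2}$ with $u_{n}|_{\partial D_{n}}=\Phi|_{\partial D_{n}}$, whose image lies in the compact geodesically convex hull of $\Phi(\partial D_{n})$.

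\emph{The key estimate.} The heart of the matter is the uniform bound
\[
M:=\sup_{n}\ \sup_{D_{n}}\ d_{\mathbb{H}^{2}}(u_{n}(z),\Phi(z))<\infty .
\]
Write $d_{n}(z)=d_{\mathbb{H}^{2}}(u_{n}(z),\Phi(z))$. On the open set $\{d_{n}>0\}$, using that $u_{n}$ is harmonic, that $\Phi$ has bounded hyperbolic tension, and that the target has curvature $\le -1$, one derives a Bochner-type differential inequality of the shape
\[
\Delta_{\mathbb{H}^{2}}\,(\cosh d_{n})\ \ge\ \cosh d_{n}-1-C ,
\]
with $C<\infty$ depending only on $K$ and finite precisely because $K<\sqrt{2}$. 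Since $d_{n}\equiv 0$ on $\partial D_{n}$, the maximum principle applied to this inequality forces $\cosh d_{n}-1\le C$, hence $d_{n}\le C=C(K)$, throughout $D_{n}$, with $C(K)$ independent of $n$. I expect this to be the main obstacle: producing the inequality with the correct positive coefficient in front of $\cosh d_{n}$ requires a careful accounting of the curvature terms against $\tau(\Phi)$, and it is exactly this accounting that forces the threshold $\sqrt{2}$.

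\emph{Passage to the limit and quasiconformality.} Granting $M<\infty$, the rest is routine. The bound $d_{n}\le C(K)$ together with the bounded hyperbolic energy density of $\Phi$ gives, via an energy comparison on fixed balls (the competitor built by interpolating between $u_{n}$ and $\Phi$, which are at bounded distance) followed by the standard interior gradient estimate for harmonic maps into nonpositively curved targets, uniform $C^{k}_{\mathrm{loc}}$ bounds on the $u_{n}$; hence a subsequence converges in $C^{\infty}_{\mathrm{loc}}$ to a harmonic map $u:\mathbb{D}^{2}\to\mathbb{H}^{2}$ with $d_{\mathbb{H}^{2}}(u,\Phi)\le C(K)$ everywhere. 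Since $\Phi$ is a homeomorphism of $\overline{\mathbb{D}^{2}}$ with $\Phi|_{\mathbb{S}^{1}}=\phi$, and two points of $\mathbb{H}^{2}$ at bounded distance share the same ideal endpoint, $u$ extends continuously to $\overline{\mathbb{D}^{2}}$ with $u|_{\mathbb{S}^{1}}=\phi$; thus $u$ solves the Dirichlet problem for $\phi$. Finally, the same $C^{0}$ bound and interior estimate show that the energy density $e(u)=\mathcal H+\mathcal L$ of $u$ is bounded above, so the Hopf differential $\psi_{u}$ of $u$ is a bounded holomorphic quadratic differential in the hyperbolic norm; combining this with the Bochner equation $\Delta_{\mathbb{H}^{2}}\log(\mathcal H/\mathcal L)=4(\mathcal H-\mathcal L)$ (with $\mathcal H\mathcal L$ the squared hyperbolic norm of $\psi_{u}$ and $\mathcal H-\mathcal L$ the Jacobian of $u$) forces $\mathcal L/\mathcal H$ to stay bounded away from $1$ — this is Wan's theorem — so $u$ is quasiconformal. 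In particular $\mathcal H-\mathcal L>0$, $u$ is a local diffeomorphism, and since $u|_{\mathbb{S}^{1}}=\phi$ is a homeomorphism of the circle, $u$ is a diffeomorphism of $\mathbb{H}^{2}$, giving the asserted quasiconformal harmonic extension.
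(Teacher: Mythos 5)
Your overall scheme (exhaustion, a uniform bound on $d(u_n,\Phi)$ via the maximum principle, interior gradient estimates, Arzel\`a--Ascoli, and Wan's theorem to convert bounded energy density into quasiconformality) is the same as the paper's. But the one step you yourself flag as ``the main obstacle'' --- the uniform distance bound --- is exactly where your sketch does not close, and the differential inequality you write down has the wrong structure. When one runs the J\"ager--Kaul/Schoen--Yau comparison for a harmonic map $u_n$ against a map $\Phi$ with nonzero tension into a target of curvature $\leq -1$, the positive term is \emph{not} $\cosh d_n-1$ with an absolute coefficient: one gets
\begin{equation*}
\Delta d_n \;\geq\; -\|\tau(\Phi)\| \;+\; \widehat{e}(\Phi)\,\tanh\frac{d_n}{2},
\end{equation*}
where $\widehat{e}(\Phi)$ is the normal component of the energy density of $\Phi$ along the geodesic joining $u_n(z)$ to $\Phi(z)$; equivalently, for $\cosh d_n$ the good term carries the coefficient $\widehat{e}(\Phi)$ and the tension term contributes $-\|\tau(\Phi)\|\sinh d_n$, not $-C$. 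The maximum principle therefore yields $\tanh(d_n/2)\leq \|\tau(\Phi)\|/\widehat{e}(\Phi)$ at an interior maximum, and this is a finite bound only if the pointwise \emph{ratio} satisfies $\sup \|\tau(\Phi)\|/\widehat{e}(\Phi)\leq c_0<1$. An absolute bound $\|\tau(\Phi)\|\leq C$ is useless here, because $\widehat{e}(\Phi)$ has no a priori positive lower bound (and certainly not the lower bound $1$ that your coefficient implicitly assumes).

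Consequently you have also mislocated where $K<\sqrt{2}$ enters. The hyperbolic tension of $\Phi$ is bounded for \emph{every} $K$ (by the bi-Lipschitz property you quote), so ``finiteness of $C$'' cannot be the role of the hypothesis, and there is no reason for $\|\tau(\Phi)\|<1$ since $e(\Phi)$ need not be small. The actual content of the proof is the chain: (i) for a Euclidean harmonic $\Phi$ one computes $\|\tau(\Phi)\|\leq\sqrt{e(\Phi)^{2}-4J^{2}(\Phi)}$ using $\Delta_{0}\Phi=0$; (ii) $\widehat{e}(\Phi)\geq\tfrac{1}{2}\bigl(e(\Phi)-\sqrt{e(\Phi)^{2}-4J^{2}(\Phi)}\bigr)$, the smallest eigenvalue of the pullback quadratic form; (iii) $K$-quasiconformality gives $2J(\Phi)/e(\Phi)\geq 2K/(K^{2}+1)$. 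Combining these yields $\|\tau(\Phi)\|/\widehat{e}(\Phi)\leq K^{2}-1$, which is $<1$ precisely when $K<\sqrt{2}$. None of (i)--(iii) appears in your proposal, so the key estimate is asserted rather than proved; the remainder of your argument (convergence, identification of the boundary values via the bounded-distance property, and Wan's theorem) is fine and matches the paper.
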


Let us add the following two remarks. Firstly, we would like to
emphasize that we do not assume any smoothness of the boundary
map. Note that we require a uniform bound on the quasiconformal
constant, thus in general the extension $\Phi$ is not
asymptotically hyperbolic and so the known results (e.g. see the
interesting results in \cite{D-W, H-W, Li-Ta1, Li-Ta2, Li-Ta3,
L-W, S-T-W, T-W, W, WANG} and the references therein) cannot be
applied. Secondly, let us point out that there is a necessary and
sufficient condition on the boundary map $\phi$ in order for the
Euclidean harmonic extension $\Phi$ to be quasiconformal. More
precisely, according to the result of Pavlovi\'{c} \cite[Theorem
1.1]{Pavlo}, a $2\pi-$periodic function $\psi$ is bi-Lipschitz and
the Hilbert transformation of $\psi'$ is essentially bounded on
$\mathbb{R}$, if and only if the Euclidean harmonic extension of
$\phi=e^{i\psi}:\mathbb{S}^{1}\rightarrow \mathbb{S}^{1}$ is
quasiconformal.  The condition that $\psi$ is bi-Lipschitz imply
that $\psi$ (and $\psi^{-1}$) is differentiable almost everywhere,
thus it may not be smooth.

We use the compact exhaustion method, we construct a sequence of
harmonic maps and we prove that there exists a subsequence that
converges to the required harmonic extension.

The organization of the paper is as follows. In Section 2 we
recall some preliminaries and in Section 3 we give the proof of
Theorem 1.

\section{Preliminaries}

The hyperbolic plane $\mathbb{H}^{2}$ can be described as the unit
disk $\mathbb{D}^{2}=\{z\in \mathbb{C}: |z| < 1 \}$ equipped with
the Poincar\'{e} metric
\begin{equation*}\gamma=4(1-|z|^2)^{-2}
|dz|^{2},
\end{equation*}
where $|dz|^{2}$ is the Euclidean metric on $\mathbb{C}$. The
ideal boundary of the hyperbolic plane can be identified with
$\mathbb{S}^{1}=\{z\in \mathbb{C}: |z| =1 \}$.

Let $\nabla_{0}$ and $\Delta_{0}$ denote the Euclidean gradient
and Laplacian respectively. The \emph{energy density} of a map
$u=(f,g) :\mathbb{H}^{2}\rightarrow \mathbb{H}^{2}$ is given by
\begin{equation*}
e(u)(z)=\frac{(1-|z|^{2})^2}{(1-|u|^{2}(z))^2}\left(
|\nabla_{0}f|^{2}(z)+|\nabla_{0}g|^{2}\right), \end{equation*} and
the \emph{Jacobian} is given by
\begin{equation*}
J(u)(z)=\frac{(1-|z|^{2})^2}{(1-|u|^{2}(z))^2}\left(
\partial_{x}f\partial_{y}g-\partial_{y}f\partial_{x}g\right).
\end{equation*}

The \emph{energy} of $u$ is given by
\begin{equation*}
E(u)=\int_{\mathbb{D}^{2}}e(u)(z) \frac{dz}{(1-|z|^2)^{2}}.
\end{equation*}

The \emph{tension field} of $u=(f,g)$ is the section of the bundle
$u^{-1}(T\mathbb{H}^{2})$ given by
\begin{equation*}
\tau(u)=Tr_{\gamma}\nabla du,\end{equation*} where $\gamma$ is the
hyperbolic metric.

The equations $\tau(u)=0$ are precisely the Euler-Lagrange
equations of the energy functional. A map $u$ that is a solution
of these equations is called a \emph{harmonic map}.

The components of the tension field are given by
\cite[p.171]{Li-Ta2}
\begin{equation*}%
\begin{split}
&  \tau^{1}(u)=\frac{(1-|z|^{2})^{2}}{4}\left(
\Delta_{0}f-\frac{2}{(1-|u|^2)}(f(|\nabla_{0}f|^{2}-|\nabla_{0}g|^{2})+2g<\nabla_{0}f%
,\nabla_{0}g>)\right)
,\\
&  \tau^{2}(u) =\frac{(1-|z|^{2})^{2}}{4}\left(
\Delta_{0}g-\frac{2}{(1-|u|^2)}(g(|\nabla_{0}g|^{2}-|\nabla_{0}f|^{2})+2f<\nabla_{0}f%
,\nabla_{0}g>)\right).
\end{split}
\end{equation*}
The norm of the tension field $u=(f,g)$ is given by
\begin{equation*}
\|\tau(u)
\|=2\frac{\sqrt{(\tau^{1}(u))^{2}+(\tau^{2}(u))^{2}}}{1-|u|^{2}}.
\end{equation*}

Let $\Phi$ be a $K-$quasiconformal map. Then, for the energy
density and the Jacobian of $\Phi$ in complex notation we have
that
\begin{equation*}
e(\Phi)(z)=\frac{2(1-|z|^2)^2}{(1-|\Phi(z)|^2)^2}\left(
|\partial_{z}\Phi|^2(z)+|\partial_{\overline{z}}\Phi|^2(z)\right),\end{equation*}
and
\begin{equation*}J(\Phi)(z)=\frac{(1-|z|^2)^2}{(1-|\Phi(z)|^2)^2}\left(|\partial_{z}\Phi|^2(z)-|\partial_{\overline{z}}\Phi|^2(z)\right).\end{equation*}
If $\Phi$ is $K-$quasiconformal then we find that
\begin{equation}\label{quasiconformal}
\frac{2J(\Phi)}{e(\Phi)} \geq \frac{2K}{K^{2} +1}>0.\end{equation}

If $z=\rho e^{i\theta}$ then the Euclidean harmonic extension
$\Phi$ of $\phi$ is given by
\begin{equation}\label{euclidean harmonic}\Phi(z)=\frac{1}{2\pi}\int_{0}^{2\pi}P_{\rho}(\theta-t)\phi(t)dt,\end{equation}
where
\begin{equation*}P_{\rho}(\theta)=\frac{1-\rho^{2}}{1+\rho^{2}
-2\rho \cos{\theta}}\end{equation*} is the Poisson kernel.

Then, $\Phi$ is a homeomorphism of $\overline{\mathbb{D}}^2$ onto
$\overline{\mathbb{D}}^2$ and $\Delta_{0}\Phi^{\alpha}=0,
\alpha=1,2$ on $\mathbb{D}^{2}$. Conversely, every orientation
preserving homeomorhism $\Phi: \overline{\mathbb{D}}^2 \rightarrow
\overline{\mathbb{D}}^2$, harmonic in $\mathbb{D}^{2}$, can be
represented in this form, \cite{Cho}. From now on, consider $\Phi:
\mathbb{D}^2 \rightarrow \mathbb{D}^2$ to be the Euclidean
harmonic extension of $\phi : \mathbb{S}^1 \rightarrow
\mathbb{S}^{1}$, given by the Poisson representation.

\section{Proof of the results}

We shall employ the compact exhaustion method. More precisely, let
$B_{R}=B_{R}(o)\subset{\mathbb{H}^{2}}$ be the ball of radius
$R>0$ centered at $o=(0,0)\in \mathbb{H}^{2}$. By \cite{Hm}, there
exists a harmonic map $u_{R}\colon B_{R} \rightarrow
\mathbb{H}^{2}$ such that $u_{R}=\Phi$ on $\partial B_{R}$, where
$\Phi$ is given by (\ref{euclidean harmonic}). Let
\begin{equation*}d_{R}(z)=r(u_{R}(z),\Phi(z)),\end{equation*} where
$r$ is the distance function of $\mathbb{H}^{2}$.

Consider $\sigma$ to be the unit speed geodesic, such that
$\sigma(0)=u_{R}(z)$ and $\sigma(d_{R})=\Phi(z)$. Next, choose
\begin{equation*}f_{1}=-\frac{d\sigma}{ds}(0)\text{  and  } \overline{f}_{1}=\frac{d\sigma}{ds}(d_{R})\end{equation*} and
complete these vectors to obtain positively oriented frames
$f_{1}, f_{2}$ and $\overline{f}_{1}, \overline{f}_{2}$ at
$u_{R}(z)$ and $\Phi(z)$ respectively. Consider $e_{1}, e_{2}$ to
be an orthonormal frame at $z$ in the domain. Let
\begin{equation*}d\Phi(e_{j}) =\Phi_{j}^{1}\overline{f}_{1}+\Phi_{j}^{2}\overline{f}_{2},\end{equation*} and \begin{equation*}\widehat{e}(\Phi)=(\Phi^{{2}}_{1})^{2}+(\Phi^{{2}}_{2})^{2}.\end{equation*}
For the energy density we have that
\begin{equation*}{e}(\Phi)=(\Phi^{{1}}_{1})^{2}+(\Phi^{{1}}_{2})^{2}+(\Phi^{{2}}_{1})^{2}+(\Phi^{{2}}_{2})^{2}.\end{equation*}
Note that $\widehat{e}(\Phi)$ depends on the local frame while
$e(\Phi)$ is independent of the local frame.

\begin{lemma}
If \begin{equation*} \sup_{z\in
\mathbb{D}^{2}}\frac{\|\tau(\Phi)\|}{\widehat{e}(\Phi)}(z)\leq
c_{0}<1,
\end{equation*}then
\begin{equation*}
d_{R}\leq 2\tanh^{-1}{c_{0}}.
\end{equation*}
\end{lemma}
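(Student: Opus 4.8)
The plan is to apply the maximum principle to the function $d_R$ on the compact ball $B_R$, exploiting the fact that $d_R$ vanishes on $\partial B_R$ together with a Bochner-type (subharmonicity) estimate for the distance between two maps into the negatively curved target $\mathbb{H}^2$. First I would recall the standard fact (going back to Schoen--Yau, and used in exactly this form in \cite{Li-Ta3} and \cite{H-W}) that if $u_R$ is harmonic and $\Phi$ is an arbitrary smooth map into a manifold of nonpositive curvature, then the distance function $d_R(z)=r(u_R(z),\Phi(z))$ satisfies a differential inequality of the form
\begin{equation*}
\Delta d_R \geq -\,|\tau(\Phi)|\,\cosh d_R \quad\text{wherever } d_R>0,
\end{equation*}
or more precisely an inequality involving $\cosh(d_R/2)$ coming from the second variation of arclength along the geodesic $\sigma$ joining $u_R(z)$ to $\Phi(z)$. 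The geometric setup with the adapted frames $f_1,f_2$ and $\overline f_1,\overline f_2$ in the excerpt is precisely what one needs to differentiate $d_R$ twice and identify the contributions of $u_R$ (which drop out by harmonicity) and of $\Phi$ (which produce the tension term).

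The key computation I would carry out is the derivation of the sharp inequality relating $\Delta \cosh d_R$ (or a suitable monotone function of $d_R$) to $\widehat e(\Phi)$ and $\|\tau(\Phi)\|$. Writing $w(z)=\cosh d_R(z)$, the Bochner formula combined with nonpositivity of the sectional curvature of $\mathbb{H}^2$ and the harmonicity of $u_R$ gives, at points where $d_R>0$,
\begin{equation*}
\Delta w \;\geq\; \bigl(\widehat e(\Phi) - \|\tau(\Phi)\|\bigr)\sinh d_R \cdot (\text{something})\;-\;\text{error},
\end{equation*}
and after the reduction one finds that $w$ is subharmonic precisely when $\|\tau(\Phi)\|/\widehat e(\Phi) < 1$, under our hypothesis $\|\tau(\Phi)\|/\widehat e(\Phi)\le c_0<1$. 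The cleanest route, and the one I would adopt, is to follow \cite{Li-Ta3}: show that under the hypothesis, the function $v(z) = d_R(z) - 2\tanh^{-1} c_0$ — or rather an appropriate auxiliary function built from $\tanh(d_R/2)$ — satisfies $\Delta v \ge 0$ (is subharmonic) wherever $d_R > 2\tanh^{-1}c_0$, so that it cannot attain an interior positive maximum. Since $v\le 0$ on $\partial B_R$ (because $d_R=0$ there), the maximum principle forces $v\le 0$ throughout $B_R$, which is exactly $d_R\le 2\tanh^{-1}c_0$.

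The main obstacle is twofold. First, $d_R$ need not be smooth at its zero set, so one must either work on the open set $\{d_R>0\}$ and control the behaviour near $\partial\{d_R>0\}$, or use a viscosity/barrier argument; this is routine but must be stated carefully. Second, and more substantively, one has to get the constant sharp: the appearance of $2\tanh^{-1}c_0$ rather than some weaker bound comes from choosing the right convex function of $d_R$ to which the maximum principle is applied — the natural candidate is $F(d_R)$ with $F$ chosen so that the curvature term and the tension term combine to give $\Delta F(d_R)\ge (\widehat e(\Phi))\bigl(1 - c_0\bigr)F''(\cdot)(\cdots)\ge 0$ exactly up to the threshold. Identifying that $F$ (essentially $F(t)=\cosh t$, with the threshold emerging from $\cosh(2\tanh^{-1}c_0) = (1+c_0^2)/(1-c_0^2)$) and verifying the inequality is the technical heart; once it is in place, the maximum principle finishes the argument immediately. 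I would also note that $\widehat e(\Phi)$ rather than $e(\Phi)$ is the correct denominator because only the component of $d\Phi$ transverse to the geodesic $\sigma$ enters the second-variation estimate, which is why the lemma is phrased with $\widehat e(\Phi)$.
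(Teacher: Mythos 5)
Your overall strategy is the paper's strategy: apply the maximum principle to $d_R$ on $B_R$ (where it vanishes on $\partial B_R$), using a second-variation/Jacobi-field estimate along the geodesic $\sigma$ from $u_R(z)$ to $\Phi(z)$, with the harmonicity of $u_R$ removing its tension term and the transverse component $\widehat e(\Phi)$ supplying the good term. Your closing remark about why $\widehat e(\Phi)$ rather than $e(\Phi)$ is the right denominator is exactly the right intuition, and you correctly locate the threshold at $\tanh(d_R/2)=c_0$.

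The gap is that the one inequality the lemma actually rests on is never written down or derived: everything you display is either wrong as stated or a placeholder. The inequality $\Delta d_R\ge -\|\tau(\Phi)\|\cosh d_R$ has no positive term and cannot yield any bound; the later display $\Delta w\ge(\widehat e(\Phi)-\|\tau(\Phi)\|)\sinh d_R\cdot(\text{something})-\text{error}$ leaves precisely the content of the lemma unspecified. What is needed, and what the paper proves, is
\begin{equation*}
\Delta d_R \;\ge\; \widehat e(\Phi)\,\tanh\frac{d_R}{2}\;-\;\|\tau(\Phi)\|,
\end{equation*}
obtained from the composition formula $\Delta d_R\ge\sum_j r_{X_jX_j}-\|\tau(\Phi)\|$ (Ding--Wang/Schoen--Yau, with $\tau(u_R)=0$) together with the J\"ager--Kaul expression $r_{X_jX_j}=\langle Y_j,Y_j'\rangle|_0^{d_R}$ for Jacobi fields $Y_j$ along $\sigma$ whose endpoint values are the normal components of $du_R(e_j)$ and $d\Phi(e_j)$; the explicit constant-curvature comparison gives $\langle Y_j,Y_j'\rangle|_0^{d_R}\ge\tanh(d_R/2)\bigl(|Y_j(0)|^2+|Y_j(d_R)|^2\bigr)\ge\tanh(d_R/2)(\Phi_j^2)^2$, and summing over $j$ produces $\widehat e(\Phi)$. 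Note also that your search for a ``right convex function $F$'' (e.g.\ $F(t)=\cosh t$) and the identity $\cosh(2\tanh^{-1}c_0)=(1+c_0^2)/(1-c_0^2)$ are unnecessary detours: once the displayed inequality is in hand, one simply evaluates at the interior maximum point $z_R$ of $d_R$ (interior because $d_R=0$ on $\partial B_R$; if the maximum is positive, $d_R$ is smooth near $z_R$, which also disposes of your regularity worry), where $\Delta d_R\le 0$ forces $\tanh(d_R(z_R)/2)\le\|\tau(\Phi)\|/\widehat e(\Phi)\le c_0$.
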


\begin{proof}
Set \begin{equation*}\begin{split}X_{j}&=du_{R}(e_{j})+d\Phi(e_{j})\\
&=(u_{R})_{j}^{1}f_{1}+
(u_{R})_{j}^{2}f_{2}+\Phi_{j}^{1}\overline{f}_{1}+\Phi_{j}^{2}\overline{f}_{2}\,\,\in
T_{u_{R}(z)}\mathbb{H}^{2}\times{T_{\Phi(z)}\mathbb{H}^{2}}.\end{split}\end{equation*}
Let $r_{X_{j}X_{j}}$ denote the Hessian of the distance function
$r$. We shall use now an estimate of the Laplacian $\Delta d_{R}$.
More precisely, according to \cite[p.621]{D-W} (see also
\cite[p.368]{S-Y3}), we have that

\begin{equation}\label{hyperbolic distance evolution}\begin{split}
\Delta d_{R}\geq \sum_{j=1}^{2}r_{X_{j}X_{j}}-\| \tau(\Phi)\|\,.
\end{split}
\end{equation}

The Hessian of the distance function can be expressed by Jacobi
fields as follows. Let us denote by \begin{equation*}Y_{j}:
[0,d_{R}]\rightarrow
T_{u_{R}(z)}\mathbb{H}^{2}\times{T_{\Phi(z)}\mathbb{H}^{2}}\end{equation*}
the Jacobi field along $\sigma$ with
\begin{equation*} Y_{j}(0)=(u_{R})_{j}^{2}f_{2} \text{  and  }
Y_{j}(d_{R})=\Phi_{j}^{2}\overline{f}_{2},\end{equation*} i.e.
$Y_{j}(0)$ and $ Y_{j}(d_{R})$ are the normal components of
$du_{R}(e_{j})$ and $d\Phi(e_{j})$ respectively.

Let $\langle \cdot,\cdot\rangle$ denote the hyperbolic inner
product. Then, by \cite[p.240]{J-K}, we have that
\begin{equation*}r_{X_{j}X_{j}}=\langle Y_{j},Y_{j}'\rangle
|_{0}^{d_{R}}:=\langle Y_{j}(d_{R}),Y_{j}'(d_{R})\rangle-\langle
Y_{j}(0),Y_{j}'(0)\rangle .\end{equation*} Moreover, following
\cite[p.241]{J-K}, we obtain the estimate
\begin{equation*}
\begin{split}
\langle Y_{j},Y_{j}'\rangle |_{0}^{d_{R}}&\geq
\frac{\cosh{d_{R}}\left(|Y_{j}(0)|^{2}+
|Y_{j}(d_{R})|^{2}\right)-2|Y_{j}(0)||Y_{j}(d_{R})|}{\sinh{d_{R}}}\\
&\geq \frac{\left(\cosh{d_{R}}-1
\right)}{\sinh{d_{R}}}\,\left(|Y_{j}(0)|^{2}+
|Y_{j}(d_{R})|^{2}\right)\\
&=\tanh{\frac{d_{R}}{2}}\,\left(|Y_{j}(0)|^{2}+ |Y_{j}(d_{R})|^{2}\right)\\
&\geq \tanh{\frac{d_{R}}{2}} \,|Y_{j}(d_{R})|^{2}\\
&=\tanh{\frac{d_{R}}{2}}\,(\Phi_{j}^{2})^{2}.
\end{split}
\end{equation*}

Thus, as in \cite[p.597]{Li-Ta3}, we find that the following
estimate holds true,
\begin{equation}\label{hyperbolic distance evolution}
\Delta d_{R} \geq
-\|\tau(\Phi)\|+\widehat{e}(\Phi)\tanh{\frac{d_{R}}{2}}\,.
\end{equation}

Let $z_{R}\in B_{R}$ be the point where the maximum of $d_{R}(z)$
is attained. Note that $z_{R}$ is in the interior of $B_{R}$
because $d_{R}(z_{0})=0$ for every $z_{0}\in \partial B_{R}$.

By the maximum principle, we find that
\begin{equation*}
\tanh{\frac{d_{R}}{2}}\leq \tanh{\frac{d_{R}(z_{R})}{2}}\leq
\frac{\|\tau(\Phi)\|}{\widehat{e}(\Phi)}(z_{R})\leq
\sup_{z\in\mathbb{D}^{2}}\frac{\|\tau(\Phi)\|}{\widehat{e}(\Phi)}(z)\leq
c_{0}<1,
\end{equation*} thus
\begin{equation*}
d_{R}\leq 2\tanh^{-1}{c_{0}},
\end{equation*}
and the proof of Lemma 1 is complete.
\end{proof}

\begin{lemma}
If $\Phi:\mathbb{D}^{2}\rightarrow \mathbb{D}^{2}$ is a Euclidean
harmonic map then
\begin{equation}\label{tension field estimate}
\|\tau(\Phi)\|\leq \sqrt{e(\Phi)^{2}-4J^{2}(\Phi)}.
\end{equation}
\end{lemma}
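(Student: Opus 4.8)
The plan is to establish the sharper identity $\|\tau(\Phi)\|^{2}=|\Phi|^{2}\bigl(e(\Phi)^{2}-4J^{2}(\Phi)\bigr)$ by a direct computation; inequality \eqref{tension field estimate} then follows immediately since $|\Phi(z)|<1$ on $\mathbb{D}^{2}$. Write $\Phi=(f,g)$, so that harmonicity means $\Delta_{0}f=\Delta_{0}g=0$, and abbreviate $a=|\nabla_{0}f|^{2}-|\nabla_{0}g|^{2}$ and $b=2\langle\nabla_{0}f,\nabla_{0}g\rangle$. With the harmonicity, the expressions for $\tau^{1}(\Phi)$ and $\tau^{2}(\Phi)$ recalled above collapse to $\tau^{1}(\Phi)=-\tfrac{(1-|z|^{2})^{2}}{2(1-|\Phi|^{2})}(fa+gb)$ and $\tau^{2}(\Phi)=-\tfrac{(1-|z|^{2})^{2}}{2(1-|\Phi|^{2})}(fb-ga)$. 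Squaring and adding, the mixed terms cancel and the bracket factors as $(fa+gb)^{2}+(fb-ga)^{2}=(f^{2}+g^{2})(a^{2}+b^{2})=|\Phi|^{2}(a^{2}+b^{2})$, so from the formula for $\|\tau(\Phi)\|$ one gets
\begin{equation*}
\|\tau(\Phi)\|^{2}=\frac{(1-|z|^{2})^{4}}{(1-|\Phi|^{2})^{4}}\,|\Phi|^{2}\,(a^{2}+b^{2}).
\end{equation*}

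It then remains to rewrite $a^{2}+b^{2}$ in terms of $e(\Phi)$ and $J(\Phi)$. Putting $A=|\nabla_{0}f|^{2}$, $B=|\nabla_{0}g|^{2}$, $C=\langle\nabla_{0}f,\nabla_{0}g\rangle$ and letting $D=\partial_{x}f\,\partial_{y}g-\partial_{y}f\,\partial_{x}g$ denote the Euclidean Jacobian, a one-line expansion gives the Lagrange identity $C^{2}+D^{2}=AB$, whence $a^{2}+b^{2}=(A-B)^{2}+4C^{2}=(A+B)^{2}-4D^{2}$. Since $e(\Phi)=\tfrac{(1-|z|^{2})^{2}}{(1-|\Phi|^{2})^{2}}(A+B)$ and $J(\Phi)=\tfrac{(1-|z|^{2})^{2}}{(1-|\Phi|^{2})^{2}}D$, this says precisely that $e(\Phi)^{2}-4J^{2}(\Phi)=\tfrac{(1-|z|^{2})^{4}}{(1-|\Phi|^{2})^{4}}(a^{2}+b^{2})$ (in particular this quantity is $\ge 0$, again by $C^{2}+D^{2}=AB$). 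Comparing with the previous display gives $\|\tau(\Phi)\|^{2}=|\Phi|^{2}\bigl(e(\Phi)^{2}-4J^{2}(\Phi)\bigr)$, and since $|\Phi|<1$ on $\mathbb{D}^{2}$ the estimate follows.

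I do not expect a genuine obstacle; the argument is pure bookkeeping, with the conformal factors $(1-|z|^{2})^{2}/(1-|\Phi|^{2})^{2}$ cancelling throughout. The only point requiring a little care is the algebraic identity $C^{2}+D^{2}=AB$ for the $2\times 2$ Jacobian matrix of $\Phi$, which is exactly what forces the right-hand side of \eqref{tension field estimate} to be the same multiple of $a^{2}+b^{2}$ as $\|\tau(\Phi)\|^{2}$, and hence makes the comparison work.
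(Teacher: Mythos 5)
Your proposal is correct and follows essentially the same route as the paper: use harmonicity to reduce $\tau^{1},\tau^{2}$ to the quadratic gradient terms, factor out $|\Phi|^{2}$ from $(fa+gb)^{2}+(fb-ga)^{2}$, and convert $(A-B)^{2}+4C^{2}$ into $(A+B)^{2}-4D^{2}$ via the Lagrange identity $C^{2}+D^{2}=AB$, which is exactly the preliminary identity the paper records as $|\langle\nabla_{0}f,\nabla_{0}g\rangle|^{2}-|\nabla_{0}f|^{2}|\nabla_{0}g|^{2}=-J^{2}(\Phi)$ up to the conformal factors. The concluding step $|\Phi|\leq 1$ is also the same.
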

\begin{proof}
Note first that after careful computations we find that
\[
\frac{(1-|z|^2)^{4}}{(1-|\Phi(z)|^{2})^{4}}\left(|<\nabla_{0}f%
,\nabla_{0}g>|^{2}-|\nabla_{0}f|^{2}|\nabla_{0}g|^{2}\right)=-J^{2}(\Phi)
\]
holds true.

Now, taking into account that $\Phi=(f,g)$ is a Euclidean harmonic
map, one can find that
\[
\begin{split}
\|\tau(\Phi) \|^{2}=&\frac{(1-|z|^2)^{4}}{(1-|\Phi(z)|^{2})^{4}}
\{\left(f(|\nabla_{0}f|^{2}-|\nabla_{0}g|^{2})+2g<\nabla_{0}f%
,\nabla_{0}g>\right)^{2}\\
&+\left(g(|\nabla_{0}g|^{2}-|\nabla_{0}f|^{2})+2f<\nabla_{0}f,\nabla_{0}g>\right)^{2}\}\\
= &\frac{(1-|z|^2)^{4}}{(1-|\Phi(z)|^{2})^{4}}\left((|\nabla_{0}f|^{2}-|\nabla_{0}g|^{2})^{2}+4|<\nabla_{0}f%
,\nabla_{0}g>|^{2}\right)|\Phi|^{2}\\
=& \frac{(1-|z|^2)^{4}}{(1-|\Phi(z)|^{2})^{4}}\left((|\nabla_{0}f|^{2}+|\nabla_{0}g|^{2})^{2}+4\left(|<\nabla_{0}f%
,\nabla_{0}g>|^{2}-|\nabla_{0}f|^{2}|\nabla_{0}g|^{2}\right)\right)|\Phi|^{2}\\
= &   \left(e(\Phi)^{2}-4J^{2}(\Phi)\right)|\Phi|^{2}.
\end{split}
\]
Since $|\Phi|\leq 1$ we conclude that
\begin{equation*}
\|\tau(\Phi)\|\leq \sqrt{e(\Phi)^{2}-4J^{2}(\Phi)}.
\end{equation*}
\end{proof}

\begin{lemma}
If $\Phi:\mathbb{D}^{2}\rightarrow \mathbb{D}^{2}$ then there
exists $\theta \in [0,2\pi)$ such that \begin{equation}\label{e widehat}\widehat{e}(\Phi)(z)=\left( \frac{1-|z|^{2}}{1-|\Phi|^{2}(z)}\right)^{2}\left(\sin{\theta}^{2}|\nabla_{0}{f}|^{2}+2\cos{\theta}\sin{\theta}<\nabla_{0}f%
,\nabla_{0}g>+ \cos{\theta}^{2}|\nabla_{0}{g}|^{2}\right).
\end{equation}
\end{lemma}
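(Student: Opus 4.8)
The plan is to unwind the definition of $\widehat{e}(\Phi)$ in terms of the chosen frames and recognize it as a quadratic form in an angle parameter. Recall that $\widehat{e}(\Phi)=(\Phi^{2}_{1})^{2}+(\Phi^{2}_{2})^{2}$, where $\Phi^{2}_{j}$ is the component of $d\Phi(e_{j})$ along $\overline{f}_{2}$, i.e.\ the component normal to the geodesic $\sigma$ joining $u_{R}(z)$ to $\Phi(z)$. Since $\overline{f}_{1},\overline{f}_{2}$ is a positively oriented orthonormal frame at $\Phi(z)$, there is an angle $\theta=\theta(z)\in[0,2\pi)$ so that, relative to the ``standard'' frame coming from the Euclidean coordinates $(f,g)$ rescaled by the conformal factor $\lambda(z)=\frac{1-|z|^{2}}{1-|\Phi|^{2}(z)}$, the vector $\overline{f}_{2}$ has components $(-\sin\theta,\cos\theta)$ (and $\overline{f}_{1}$ has components $(\cos\theta,\sin\theta)$). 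This is just the statement that any orthonormal frame in a $2$-dimensional inner product space is a rotation of any fixed one.

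Next I would write $d\Phi(e_{j})$ in that same standard frame. Since $e_{1},e_{2}$ is orthonormal at $z$ for the hyperbolic metric, and the hyperbolic metric on the domain is $\lambda_{0}^{2}|dz|^{2}$ with $\lambda_{0}=\frac{2}{1-|z|^{2}}$ while on the target it is the analogous expression, the matrix of $d\Phi$ with respect to these orthonormal frames is $\frac{1-|z|^{2}}{1-|\Phi|^{2}}$ times the Euclidean differential $d\Phi_{\mathrm{eucl}}=\begin{pmatrix}\partial_{x}f&\partial_{y}f\\ \partial_{x}g&\partial_{y}g\end{pmatrix}$ (here I may choose $e_{1},e_{2}$ aligned with $\partial_{x},\partial_{y}$, which is harmless since $\widehat{e}(\Phi)$ transforms as a form). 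Then the $\overline{f}_{2}$-component of $d\Phi(e_{j})$ is obtained by taking the inner product of the $j$-th column with $(-\sin\theta,\cos\theta)$, giving
\begin{equation*}
\Phi^{2}_{j}=\lambda(z)\bigl(-\sin\theta\,\partial_{j}f+\cos\theta\,\partial_{j}g\bigr),\qquad \lambda(z)=\frac{1-|z|^{2}}{1-|\Phi|^{2}(z)}.
\end{equation*}
Squaring and summing over $j=1,2$ then produces
\begin{equation*}
\widehat{e}(\Phi)(z)=\lambda(z)^{2}\Bigl(\sin^{2}\theta\,|\nabla_{0}f|^{2}-2\sin\theta\cos\theta\langle\nabla_{0}f,\nabla_{0}g\rangle+\cos^{2}\theta\,|\nabla_{0}g|^{2}\Bigr),
\end{equation*}
which is the asserted identity up to the sign of the middle term; since $\theta$ ranges over all of $[0,2\pi)$ one may replace $\theta$ by $-\theta$ (or $2\pi-\theta$) to absorb the sign, yielding exactly \eqref{e widehat}.

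I do not expect any genuine obstacle here: the content is entirely bookkeeping about how the frame-dependent quantity $\widehat{e}(\Phi)$ relates to the intrinsic data. The one point requiring a little care is the conformal factor: one must check that the ratio of the target to domain conformal factors is precisely $\frac{1-|z|^{2}}{1-|\Phi|^{2}}$ and not its square, which is consistent with the formula for $e(\Phi)$ already recorded in Section 2 (where $e(\Phi)=\bigl(\frac{1-|z|^{2}}{1-|\Phi|^{2}}\bigr)^{2}(|\nabla_{0}f|^{2}+|\nabla_{0}g|^{2})$ and $\widehat{e}(\Phi)$ is one of the two ``halves'' of it in the rotated frame); indeed, adding \eqref{e widehat} to the same expression with $\theta$ replaced by $\theta+\pi/2$ recovers $e(\Phi)$ exactly, which is a useful consistency check to include. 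The other thing to note is that orientations match: because both frames $f_{1},f_{2}$ and $\overline{f}_{1},\overline{f}_{2}$ are chosen positively oriented and $\Phi$ is sense-preserving, the rotation angle $\theta$ is well defined, and this is all that is needed.
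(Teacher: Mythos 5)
Your proposal is correct and follows essentially the same route as the paper: both identify $\overline{f}_1,\overline{f}_2$ as a rotation by an angle $\theta$ of the conformally rescaled coordinate frame $\tfrac{1-|\Phi|^2}{2}\partial_f,\tfrac{1-|\Phi|^2}{2}\partial_g$, compute the $\overline{f}_2$-components of $d\Phi(e_j)$, and sum the squares; the sign discrepancy in the cross term is, as you note, absorbed by replacing $\theta$ with $-\theta$ (the paper's convention expresses $\partial_f,\partial_g$ in terms of $\overline{f}_1,\overline{f}_2$ rather than the reverse, which accounts for it). Your consistency check that the $\theta$ and $\theta+\pi/2$ expressions sum to $e(\Phi)$ is a nice addition not present in the paper.
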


\begin{proof}
Consider at $z_{R}$ the orthonormal frame
\begin{equation*}e_{1}=\frac{1-|z|^{2}}{2}\partial_{x}\text{  and  }
e_{2}=\frac{1-|z|^{2}}{2}\partial_{y}.\end{equation*} Consider the
positively oriented frames $f_{1}, f_{2}$ and $\overline{f}_{1},
\overline{f}_{2}$ at $u_{R}(z_{R})$ and $\Phi(z_{R})$ respectively
as in the proof of Lemma 1.

Let $\Phi=(f,g)$. There exists $\theta \in [0,2\pi)$ such that
\begin{equation*}\frac{1-|\Phi(z)|^{2}}{2}\,\partial_{f}=\left(\cos{\theta}{\overline{f}_{{1}}}+\sin{\theta}\overline{f}_{{2}}\right)\text{
 and  }
\frac{1-|\Phi(z)|^{2}}{2}\,\partial_{g}=\left(-\sin{\theta}\overline{f}_{{1}}+\cos{\theta}\overline{f}_{{2}}\right).\end{equation*}

Then we observe that
\begin{equation*}
\begin{split}
d\Phi(e_{1})&=\frac{1-|z|^{2}}{2}d\Phi(\partial_{x})\\
&=\frac{1-|z|^{2}}{2}\left(\partial_{x}f
\partial_{f}+\partial_{x}g
\partial_{g}\right)\\
&=\frac{1-|z|^{2}}{1-|\Phi(z)|^{2}}(\cos{\theta}\partial_{x}f
-\sin{\theta}\partial_{x}g){\overline{f}_{1}}\\
&+\frac{1-|z|^{2}}{1-|\Phi(z)|^{2}}(\sin{\theta}\partial_{x}f
+\cos{\theta}\partial_{x}g){\overline{f}_{2}}.
\end{split}
\end{equation*}

Thus,
\[\Phi_{1}^{1}=\frac{1-|z|^{2}}{1-|\Phi(z)|^{2}}(\cos{\theta}\partial_{x}f
-\sin{\theta}\partial_{x}g)\text{  and  }
\Phi_{1}^{2}=\frac{1-|z|^{2}}{1-|\Phi(z)|^{2}}(\sin{\theta}\partial_{x}f
+\cos{\theta}\partial_{x}g).\]

Similarly, one can find that
\[\Phi_{2}^{1}=\frac{1-|z|^{2}}{1-|\Phi(z)|^{2}}(\cos{\theta}\partial_{y}f
-\sin{\theta}\partial_{y}g)\text{  and  }
\Phi_{2}^{2}=\frac{1-|z|^{2}}{1-|\Phi(z)|^{2}}(\sin{\theta}\partial_{y}f
+\cos{\theta}\partial_{y}g).\]

Thus,
\begin{equation*}\widehat{e}(\Phi)(z)=\left( \frac{1-|z|^{2}}{1-|\Phi|^{2}(z)}\right)^{2}\left(\sin{\theta}^{2}|\nabla_{0}{f}|^{2}+2\cos{\theta}\sin{\theta}<\nabla_{0}f%
,\nabla_{0}g>+ \cos{\theta}^{2}|\nabla_{0}{g}|^{2}\right).
\end{equation*}
\end{proof}

It becomes clear from the above lemma that $\widehat{e}(\Phi)(z)$
depends on the local frame. Note that
\begin{equation*}e(\Phi)=\left(
\frac{1-|z|^{2}}{1-|\Phi|^{2}(z)}\right)^{2}\left(|\nabla_{0}{f}|^{2}+|\nabla_{0}{g}|^{2}\right),\end{equation*}
thus $e(\Phi)$ is independent of the local frame.

\begin{corollary}
If $\Phi:\mathbb{D}^{2}\rightarrow \mathbb{D}^{2}$, then
\begin{equation}\label{new energy density estimate}
\widehat{e}(\Phi)\geq\frac{e(\Phi)-\sqrt{e(\Phi)^{2}-4J^{2}(\Phi)}}{2}.
\end{equation}
\end{corollary}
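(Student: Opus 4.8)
The plan is to read \eqref{e widehat} as saying that $\widehat{e}(\Phi)(z)$ is the value of a binary quadratic form on a unit vector, and then to bound that value below by the smallest eigenvalue of the form, which will turn out to be exactly the right-hand side of \eqref{new energy density estimate}. To set notation, abbreviate
\[
\lambda=\left(\frac{1-|z|^{2}}{1-|\Phi|^{2}(z)}\right)^{2}>0,\qquad a=|\nabla_{0}f|^{2},\qquad b=|\nabla_{0}g|^{2},\qquad c=\langle\nabla_{0}f,\nabla_{0}g\rangle,
\]
so that Lemma 3 gives $\widehat{e}(\Phi)(z)=\lambda\bigl(a\sin^{2}\theta+2c\sin\theta\cos\theta+b\cos^{2}\theta\bigr)$ for some $\theta\in[0,2\pi)$, while $e(\Phi)=\lambda(a+b)$.

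First I would minimise the trigonometric factor in $\theta$. Using the double-angle identities it equals $\tfrac{a+b}{2}+\tfrac{b-a}{2}\cos 2\theta+c\sin 2\theta$, and the minimum of this over all angles is $\tfrac{a+b}{2}-\tfrac12\sqrt{(a-b)^{2}+4c^{2}}$ (equivalently, $\tfrac{a+b}{2}-\tfrac12\sqrt{(a-b)^2+4c^2}$ is the least eigenvalue of $\left(\begin{smallmatrix}a&c\\ c&b\end{smallmatrix}\right)$, and a quadratic form evaluated on a unit vector is always at least its least eigenvalue). Hence
\[
\widehat{e}(\Phi)(z)\;\geq\;\lambda\cdot\frac{(a+b)-\sqrt{(a-b)^{2}+4c^{2}}}{2}.
\]

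Next I would identify the square root with $\sqrt{e(\Phi)^{2}-4J^{2}(\Phi)}$. By Lagrange's identity, $(\partial_{x}f\,\partial_{y}g-\partial_{y}f\,\partial_{x}g)^{2}=ab-c^{2}$, so from the formula for $J(\Phi)$ we get $J^{2}(\Phi)=\lambda^{2}(ab-c^{2})$ — this is precisely the identity already recorded at the beginning of the proof of Lemma 2. Therefore $e(\Phi)^{2}-4J^{2}(\Phi)=\lambda^{2}\bigl((a+b)^{2}-4(ab-c^{2})\bigr)=\lambda^{2}\bigl((a-b)^{2}+4c^{2}\bigr)$, and taking square roots (with $\lambda>0$) yields $\sqrt{e(\Phi)^{2}-4J^{2}(\Phi)}=\lambda\sqrt{(a-b)^{2}+4c^{2}}$. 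Substituting into the previous display gives
\[
\widehat{e}(\Phi)\;\geq\;\frac{\lambda(a+b)-\lambda\sqrt{(a-b)^{2}+4c^{2}}}{2}\;=\;\frac{e(\Phi)-\sqrt{e(\Phi)^{2}-4J^{2}(\Phi)}}{2},
\]
which is \eqref{new energy density estimate}.

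There is no genuinely hard step here; it is a short computation once Lemma 3 is in hand. The only points that need a word of care are that $\lambda>0$ in the interior (so that the square root really pulls out a factor $\lambda$, not $|\lambda|$, and the estimate is not vacuous) and that $(a-b)^{2}+4c^{2}\geq 0$, i.e. $e(\Phi)^{2}\geq 4J^{2}(\Phi)$, so that the radicands in \eqref{tension field estimate} and in \eqref{new energy density estimate} are nonnegative and everything stays real.
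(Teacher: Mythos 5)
Your proposal is correct and follows the same route as the paper: both view $\widehat{e}(\Phi)$ as a quadratic form evaluated on a unit vector, bound it below by the least eigenvalue of the associated symmetric matrix, and identify that eigenvalue with $\tfrac12\bigl(e(\Phi)-\sqrt{e(\Phi)^{2}-4J^{2}(\Phi)}\bigr)$ via the identity $J^{2}(\Phi)=\lambda^{2}(ab-c^{2})$. You merely spell out the eigenvalue computation (double-angle identities, Lagrange's identity) that the paper leaves implicit.
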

\begin{proof}

We observe from (\ref{e widehat}) that $\widehat{e}(\Phi)$ is a
quadratic form, restricted on the circle. The maximum and minimum
value of the function
\begin{equation*}F(X,Y)=\left( \frac{1-|z|^{2}}{1-|\Phi|^{2}(z)}\right)^{2}\left(X^{2}|\nabla_{0}{f}|^{2}+2X Y<\nabla_{0}f%
,\nabla_{0}g>+ Y^{2}|\nabla_{0}{g}|^{2}\right),\end{equation*} on
the circle $X^{2}+Y^{2}=1$, are the eigenvalues of the following
matrix
\begin{equation*}
A=\left(
\frac{1-|z|^{2}}{1-|\Phi|^{2}(z)}\right)^{2}\begin{bmatrix}
       |\nabla_{0}{f}|^{2} & <\nabla_{0}f%
,\nabla_{0}g>           \\[0.3em]
       <\nabla_{0}f%
,\nabla_{0}g> & |\nabla_{0}{g}|^{2}
     \end{bmatrix}.
\end{equation*}

More precisely, we find that
\begin{equation*}\frac{e(\Phi)-\sqrt{e(\Phi)^{2}-4J^{2}(\Phi)}}{2} \leq
\widehat{e}(\Phi)(z)\leq
\frac{e(\Phi)+\sqrt{e(\Phi)^{2}-4J^{2}(\Phi)}}{2},
\end{equation*}
and the proof of the corollary is complete.
\end{proof}

\subsection{End of the proof of Theorem 1}

From (\ref{new energy density estimate}) and (\ref{tension field
estimate}) we find that
\[\frac{\|
\tau(\Phi)\|}{\widehat{e}(\Phi)}\leq
\frac{2\sqrt{e(\Phi)^{2}-4J^{2}(\Phi)}}{{e(\Phi)-\sqrt{e(\Phi)^{2}-4J^{2}(\Phi)}}}.
\]

Note that since $\Phi$ is $K-$quasiconformal, we take into account
(\ref{quasiconformal}) and we find that
\begin{equation}\label{condition}
\frac{\| \tau(\Phi)\|}{\widehat{e}(\Phi)}\leq K^{2}-1<1,
\end{equation}
since we have assumed that $K<\sqrt{2}$ holds true.

From Lemma 1 and (\ref{condition}) we find that

\[
\begin{split} d_{R}\leq \tanh^{-1}(
{K^{2}-1})<\infty.
\end{split}
\]
Thus we conclude that a uniform bound of $d_{R}$ independent of
$R$ exists.

According to \cite[Theorem 5.1]{E-M-M}, if $\Phi$ is
$K-$quasiconformal then there exists a constant $a(K) > 0$ such
that
\[d(z,w) K - a(K) \leq d(\Phi(z), \Phi(w)) \leq Kd(z,w) + a(K).\]

Thus, by the triangular inequality, it follows that
\begin{equation}\label{markovic}
\begin{split}
d_{R}(u_{R}(x),u_{R}(y))&\leq
d_{R}(\Phi(x),u_{R}(x))+d_{R}(\Phi(x),\Phi(y))+d_{R}(\Phi(y),u_{R}(y))\\
&\leq c+ K d(x,y)\,.
\end{split}
\end{equation}

We shall now recall the following result \cite[Lemma 2.1]{H-W}.

\begin{lemma}
If $z\in B_{R}$ is at a distance at least 1 from $\partial B_{R}$,
then $e(u_{R})\leq C(k)$, where $k>0$ is such that
$u_{R}(B_{1})\subset B_{k}(u_{R}(z))$.
\end{lemma}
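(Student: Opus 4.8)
The plan is to argue by a standard interior gradient estimate for harmonic maps into a nonpositively curved target, which in this setting reduces to a Bochner-type inequality plus the mean value property for subharmonic functions. First I would note that the target $\mathbb{H}^2$ has curvature $-1\le 0$, so for any harmonic map $u_R$ the energy density $e(u_R)$ satisfies a Bochner inequality of the form $\Delta e(u_R)\ge -2\,e(u_R)^2$ (more precisely $\Delta e(u_R)\ge 2|\nabla du_R|^2 + \mathrm{Ric}\text{-term} - 2\,e(u_R)^2$, and the Ricci term of $\mathbb{H}^2$ only helps). This is the Eells–Sampson Bochner formula; I would cite it or reproduce the one-line version. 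By itself this does not bound $e(u_R)$, because of the quadratic term on the right, so the second ingredient is the hypothesis that the image of a unit ball $B_1=B_1(z)$ lands in a ball $B_k(u_R(z))$ of controlled radius $k$ — this is exactly what the uniform distance bound $d_R\le \tanh^{-1}(K^2-1)$ together with \eqref{markovic} provides, since $u_R(B_1(z))$ has diameter at most $c+K$, so $k=k(K)$ is a fixed constant.

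The key steps, in order: (1) Compose $u_R$ with the distance-squared function $\psi=\tfrac12 r(\cdot,u_R(z))^2$ on $\mathbb{H}^2$; since $\mathbb{H}^2$ is Hadamard, $\psi$ is smooth and convex, and on $B_k(u_R(z))$ one has bounds $1\le \mathrm{Hess}\,\psi\le C(k)$ and $\psi\le \tfrac12 k^2$. Then $\rho:=\psi\circ u_R$ is a bounded subsolution: $\Delta\rho = \mathrm{Hess}\,\psi(du_R,du_R) + d\psi(\tau(u_R)) \ge e(u_R)$ because $u_R$ is harmonic ($\tau(u_R)=0$) and $\mathrm{Hess}\,\psi\ge g$. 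So $e(u_R)\le \Delta\rho$ with $0\le\rho\le \tfrac12 k^2$ on $B_1(z)$. (2) Feed this into the Bochner inequality: set $w=e(u_R)$; then on $B_1(z)$ we have $\Delta w\ge -2w^2$ and $w\le \Delta\rho$ with $\rho$ bounded. (3) Run Moser iteration / the standard $L^p$-to-$L^\infty$ argument, or more cleanly use the auxiliary function trick: consider $h=(1-\chi)^2 w$ for a cutoff $\chi$ supported in $B_1(z)$ with $\chi\equiv 1$ near $z$, find the interior maximum of $h$, and at that point use $\Delta h\le 0$ together with $\Delta w\ge -2w^2$ and the $L^1$ bound $\int_{B_1(z)} w \le \int \Delta\rho \le C(k)$ (integrating the subharmonicity against the cutoff) to get a pointwise bound $w(z)=e(u_R)(z)\le C(k)$. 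This is precisely the Schoen–Yau/Choi interior estimate; I would cite [S-Y3] or [Cho] for the precise form if I did not want to reproduce it.

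The main obstacle is step (3): the quadratic nonlinearity $-2w^2$ in the Bochner inequality means one cannot simply invoke a linear mean-value inequality, and one must genuinely use the smallness/boundedness of the image (the bound $k=k(K)$) to close the estimate — this is where the hypothesis "$u_R(B_1)\subset B_k(u_R(z))$" is essential and cannot be dispensed with. A secondary technical point is ensuring all constants depend only on $k$ (equivalently on $K$) and not on $R$ or on the base point, but this is automatic once one works on the fixed-radius ball $B_1(z)$ and uses that $\mathbb{H}^2$ is homogeneous, so the Bochner constant and the Hessian bounds for $\psi$ are translation-invariant. I would conclude by remarking that the constant $C(k)$ is monotone in $k$, so the uniform bound $k\le k(K)$ from \eqref{markovic} yields a single constant $C=C(K)$ valid for all $R$ and all $z$ at distance $\ge 1$ from $\partial B_R$.
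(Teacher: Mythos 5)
First, note that the paper offers no proof of this statement: it is simply recalled as \cite[Lemma 2.1]{H-W}, where it is obtained from standard interior estimates for the harmonic map system once the image is confined to a fixed compact subset of the disk. Your plan is the other standard route (Bochner formula plus convexity of the squared target distance), and your steps (1) and (2) are sound: with $\psi=\tfrac12 r(\cdot,u_R(z))^2$ one has $\Delta(\psi\circ u_R)\ge e(u_R)$ and $0\le\psi\circ u_R\le k^2/2$ on $B_1(z)$, and pairing with a cutoff yields the $L^1$ bound $\int_{B_{1/2}(z)}e(u_R)\le C(k)$; this is indeed where the hypothesis $u_R(B_1)\subset B_k(u_R(z))$ enters.

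The one genuine problem is the sign of the curvature term in your Bochner inequality, and it matters for your step (3). For a target of \emph{nonpositive} curvature the term $-\sum_{i,j}\langle R^N(du(e_i),du(e_j))du(e_j),du(e_i)\rangle$ is nonnegative; the only negative contribution is the domain Ricci term, which is linear, so in fact $\Delta e(u_R)\ge -2e(u_R)$. The quadratic term $-2e^2$ you wrote down is what appears for positively curved targets, and with it your step (3) as literally described would not close: the inequality $\Delta w\ge -2w^2$ is satisfied by arbitrarily large constants, so a domain cutoff plus the maximum principle cannot bound $w$ from above, and the Choi--Schoen $\varepsilon$-regularity route requires the $L^1$ norm to be \emph{small}, not merely bounded by $C(k)$ (Cheng's gradient estimate circumvents this by building the target distance, not a domain cutoff, into the auxiliary function). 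Fortunately the correct sign makes the argument easier than you feared: $e(u_R)$ is a nonnegative subsolution of the linear operator $\Delta+2$, so the local mean value inequality gives $e(u_R)(z)\le C\,\|e(u_R)\|_{L^1(B_{1/2}(z))}\le C(k)$, with all constants depending only on $k$ by homogeneity of $\mathbb{H}^2$, hence only on $K$ once one inserts the uniform bound on $d_R$ and (\ref{markovic}). With that correction your argument is a complete and legitimate proof of the quoted lemma.
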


By (\ref{markovic}), we have that $d(z,w)<1$ implies that
$d(u_{R}(z),u_{R}(w))<c(K)$. So, by Lemma 5 follows that
\[e(u_{R})(z)<C(K),\] i.e. the energy density is uniformly bounded
for all $z$ such that $B_{1}(z)\subset B_{R}$.

The uniform bounds on $d_{R}(u_{R},\Phi)$ and $e(u_{R})$ allow us,
as in \cite[Sections 3.3-3.4]{H-W}, to apply the Arzela-Ascoli
theorem. Thus, we find a subsequence $R_{k}$ such that $u_{R_{k}}$
converges uniformly on compact sets to a harmonic map $u$, that is
at a bounded distance from $\Phi$ and has uniformly bounded energy
density.

Consequently, we have that \begin{equation*}{d(u,\Phi)}\leq
\tanh^{-1}(K^{2}-1)<1.\end{equation*} Thus, it follows that $u$
and $\Phi$ have the same asymptotic boundary $\phi$.

According to \cite[Theorem 13]{W}, the energy density of an
orientation preserving harmonic map of the hyperbolic disk onto
itself is uniformly bounded if and only if the harmonic map is
quasiconformal. Thus, $u$ is a quasiconformal harmonic extension
of $\phi$, and the proof is complete.

\textbf{Acknowledgements}: The author would like to thank Michel
Marias and Andreas Savas-Halilaj for their stimulating
discussions.

\end{document}